\documentclass[12pt]{article}

\usepackage{amsfonts, amssymb, amsmath, epsf}
\usepackage{amsthm}
\usepackage{dsfont}

\DeclareMathOperator{\dist}{dist}

\theoremstyle{plain}
\newtheorem{theorem}{Theorem}
\newtheorem{lemma}{Lemma}
\newtheorem{proposition}{Proposition}

\theoremstyle{definition}
\newtheorem{definition}{Definition}

\newtheorem{remark}{Remark}
\newtheorem{question}{Question}
\newtheorem{conjecture}{Conjecture}








\title{Shadowing for actions of some finitely generated groups.}
\author{Alexey V. Osipov
\footnote{Chebyshev Laboratory, Saint-Petersburg State University,
14th line of Vasilievsky island, 29B, Saint-Petersburg, 199178, Russia;
Centro di ricerca matematica Ennio de Giorgi, Scuola Normale Superiore,
Piazza dei Cavalieri 3,
Pisa, 56100, Italy;
osipovav28@googlemail.com}
\and
Sergey B. Tikhomirov \footnote{Free University Berlin, Arnimallee 3, Berlin, 14195, Germany; Chebyshev Laboratory, Saint-Petersburg State University,
14th line of Vasilievsky island, 29B, Saint-Petersburg, 199178, Russia; sergey.tikhomirov@gmail.com}
}
\begin{document}

\newcommand{\ep}{\varepsilon}
\newcommand{\lam}{\lambda}
\newcommand{\sref}[1]{(\ref{#1})}
\newcommand{\ZZ}{\mathds{Z}}
\newcommand{\RR}{\mathds{R}}

\maketitle

\begin{abstract}
We introduce a notion of shadowing property for actions of finitely generated groups and study its basic properties. We formulate and prove a shadowing lemma for actions of nilpotent groups. We construct an example of a faithful linear action of a solvable Baumslag-Solitar group and show that the shadowing property depends on quantitative characteristics of hyperbolicity. Finally we show that any linear action of a non-abelian free group does not have the shadowing property.
\end{abstract}

\textbf{keywords:} Shadowing, expansivity, group action, nilpotent group, solvable group, free group.

\section{Introduction. }

Theory of shadowing is now a sufficiently well-developed branch of theory of dynamical systems (see monographs \cite{PilBook, PalmBook} and a review of modern results~\cite{Pil2}). A dynamical system has a shadowing property if any sufficiently precise approximate trajectory (pseudotrajectory) is close to some exact trajectory. Shadowing theory plays an important role in theory of structural stability. The shadowing lemma \cite{Anosov, Bowen} is one of key results in theory of shadowing. It says that a dynamical system has the shadowing property in a small neighborhood of a hyperbolic set.

In parallel with a classical theory of dynamical systems (which studies actions of $\mathbb{Z}$ and $\mathbb{R}$), global qualitative properties of actions of more complicated groups were studied
(see the book \cite{Katok2} and the review \cite{Fisher}).
The paper \cite{PilTikh} introduced the shadowing property for actions of abelian groups $\mathbb{Z}^n \times \mathbb{R}^m$ for nonnegative integer $n$ and $m$.

In the present paper we introduce and study the shadowing property for actions of finitely generated, not necessarily, abelian groups.

For the case of finitely generated nilpotent groups we prove that an action of the whole group has shadowing (and expansivity) if the action of at least one element has shadowing and expansivity (Theorem~\ref{thmVirtNilp}).
This result can be viewed as a shadowing lemma for actions of nilpotent groups, since it implies that if an action of one element is hyperbolic, then the group action has the shadowing property. Note that in some cases an action of a  group is called hyperbolic if there exists an element which action is hyperbolic (see \cite{Barbot, Katok2, Hurder}). 

We show that our result cannot be directly generalized to the case of solvable groups. We consider a particular linear action of a solvable Baumslag-Solitar group (Theorem~\ref{ThBS}) and demonstrate that the shadowing property has a more complicated nature, in particular, it depends on quantitative characteristics of hyperbolicity of the action.

We also consider actions of ''big groups'' (free groups, groups with infinitely many ends). In particular we show that there is no linear action of a non-abelian free group that has shadowing. This statement leads us to a question: which groups admit an action satisfying shadowing property?

These three results illustrate that the shadowing property depends not only on hyperbolic properties of actions of its elements but on the group structure as well.

The article is organized as follows. In Section 2 we give a definition of shadowing for actions of finitely generated groups. In Section 3 we recall necessary notions from group theory. In Section 4 we give precise statements of main results. Sections 5--7 are devoted to detailed consideration of actions of nilpotent, solvable Baumslag-Solitar, and free groups respectively.  In the appendix for consistency we prove Proposition~\ref{prop} about independence of shadowing on a choice of a generating set.


\section{Main definitions.}

Let $G$ be a finitely generated (not necessarily abelian) group.
Let $\Omega$ be a metric space with a metric $\dist$. For $x \in \Omega$, $U \subset \Omega$, $\delta>0$ denote
$$
B(\delta, x) = \{y\in \Omega: \dist(x, y) < \delta\}, \quad
B(\delta, U) = \cup_{x \in U} B(\delta, x).
$$

We say that a map $\Phi: G \times \Omega \to \Omega$ is a (left) action of a group $G$ if the following holds:
\begin{itemize}
\item[(G1)] the map $f_g = \Phi(g, \cdot)$ is a homeomorphism of $\Omega$ for any $g \in G$;

\item[(G2)] $\Phi(e, x) \equiv x$, where $e \in G$ is the identity element of the group $G$;

\item[(G3)] $\Phi(g_1g_2, x) = \Phi(g_1, \Phi(g_2, x))$, for any $g_1, g_2 \in G$, $x \in \Omega$.
\end{itemize}

We say that an action $\Phi$ is \textit{uniformly continuous} if for some symmetric generating set~$S$ (a generating set is called symmetric if together with any element $s \in S$ it contains~$s^{-1}$) of a group $G$ the maps $f_{s}$ are uniformly continuous for all $s\in S$. Note that if $\Omega$ is compact, then any action of a finitely generated group is uniformly continuous.

Let us fix some finite symmetric generating set $S$ of a group $G$.

\begin{definition}\label{defPst}
For $d >0$ we say that a sequence $\{y_{g}\}_{g\in G}$ is a $d$-pseudo\-tra\-jec\-to\-ry of an action $\Phi$ (with respect to the generating set $S$) if
\begin{equation}
\label{dpstdef}
\dist(y_{sg}, f_s(y_g)) < d, \quad s \in S, g \in G.
\end{equation}
\end{definition}

\begin{definition}\label{defStSh}
We say that an uniformly continuous action $\Phi$ has the \textit{shadowing property} on a set $V \subset \Omega$ if for any $\ep > 0$ there exists $d > 0$ such that for any $d$-pseudotrajectory $\{y_g \in V\}_{g\in G}$ there exists a point $x_e \in \Omega$ such that
\begin{equation}\label{eqDefSh}
\dist(y_g, f_g(x_e)) < \ep, \quad g \in G.
\end{equation}
In this case we say that $\{y_g\}_{g\in G}$ is $\ep$-shadowed by the exact trajectory $\{x_g = f_g(x_e)\}_{g\in G}$.
If $V = \Omega$, we simply say that $\Phi$ has the shadowing property.
\end{definition}

This notion is a natural generalization of the concept of the shadowing property introduced in \cite{PilTikh} for actions of $\ZZ^n$.

Let us also note that the definition of a pseudotrajectory depends on a choice of the generating set $S$. However the following proposition shows that if an  uniformly continuous action has shadowing for one finite symmetric generating set, then it has shadowing for any finite symmetric generating~set.

\begin{proposition}
\label{prop}
Let $S$ and $S^{\prime}$ be two finite symmetric generating sets for a group $G$.  An uniformly continuous action $\Phi$ has the shadowing property on a set $V \subset \Omega$ with respect to the generating set $S$, if and only if it has the shadowing property on a set $V \subset \Omega$ with respect to the generating set $S^{\prime}$.
\end{proposition}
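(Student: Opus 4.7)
My plan is to use symmetry and prove only that shadowing with respect to $S$ implies shadowing with respect to $S'$. The core technical step is a reduction of the following form: for every $d > 0$ there exists $d' > 0$ such that every $d'$-pseudotrajectory of $\Phi$ with respect to $S'$ is automatically a $d$-pseudotrajectory with respect to $S$. Given this, the proposition is immediate: for $\ep > 0$, pick $d > 0$ from the shadowing hypothesis with respect to $S$ and then $d' > 0$ from the reduction; any $d'$-pseudotrajectory with respect to $S'$ is then a $d$-pseudotrajectory with respect to $S$ and hence $\ep$-shadowed by an exact trajectory.

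A preliminary observation is that uniform continuity of $\Phi$ does not depend on the choice of finite symmetric generating set: every $s \in S$ is a word in elements of $S'$, so $f_s$ is a finite composition of uniformly continuous maps and is itself uniformly continuous, and the same is true of every prefix of such a word. To prove the reduction, fix for each $s \in S$ a factorization $s = s'_1 s'_2 \cdots s'_{k_s}$ with $s'_i \in S'$ (depending on $s$), and set $N = \max_{s \in S} k_s$. For arbitrary $s \in S$ and $g \in G$, compare $y_{sg}$ with $f_s(y_g) = f_{s'_1} \circ \cdots \circ f_{s'_{k_s}}(y_g)$ by telescoping along the successive translates $g_j = s'_{k_s - j + 1} \cdots s'_{k_s} g$ and the intermediate points $z_j = (f_{s'_1} \circ \cdots \circ f_{s'_{k_s - j}})(y_{g_j})$, so that $z_0 = f_s(y_g)$ and $z_{k_s} = y_{sg}$. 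Two consecutive members $z_{j-1}, z_j$ differ only by a single $d'$-pseudotrajectory step in $S'$ pushed through a uniformly continuous prefix, so $\dist(z_{j-1}, z_j)$ can be made arbitrarily small by shrinking $d'$.

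The only real obstacle is choosing $d'$ uniformly. Since $S$ is finite and every factorization has length at most $N$, only finitely many prefixes of factorizations appear in the estimates, so the uniform continuity of this finite family of maps lets me pick a single $d'$ small enough that every term in every telescoping sum is at most $d/N$. Summing at most $N$ such terms and applying the triangle inequality then yields $\dist(y_{sg}, f_s(y_g)) < d$, which establishes the reduction and, combined with symmetry, proves the proposition.
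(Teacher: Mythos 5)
Your proposal is correct and follows essentially the same route as the paper: both arguments rewrite each $s\in S$ as a word of uniformly bounded length in $S'$ and use a telescoping sum together with uniform continuity of the finitely many prefix compositions to show that a sufficiently fine $S'$-pseudotrajectory is already a $d$-pseudotrajectory with respect to $S$. The only cosmetic difference is that the paper packages the bound on word lengths via the bilipschitz equivalence of word norms, whereas you take $N=\max_{s\in S}k_s$ directly.
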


The proof of Proposition \ref{prop} is straightforward, see Appendix.

The following notion of expansivity is important for our results:
\begin{definition}
An action $\Phi$ is \textit{expansive} (or has \textit{expansivity}) on a set $U \subset \Omega$ if there exists $\Delta > 0$ such that if
$$
\Phi(g, x_1),\Phi(g, x_2) \in U, \quad \dist(\Phi(g, x_1), \Phi(g, x_2)) < \Delta, \quad \forall g \in G
$$
for some $x_1, x_2 \in U$, then
 $x_1 = x_2$.
\end{definition}
Note that if $G_1 \leq G$ is a subgroup of $G$ and $\Phi|_{G_1}$ has expansivity, then~$\Phi$ has expansivity too.

Any homeomorphism $f: \Omega \to \Omega$ induces an action $\Phi_f: \mathbb{Z} \times \Omega \to \Omega$ of the group $\mathbb{Z}$ defined as $\Phi_f(k, x) = f^k(x)$ for $k\in\mathbb{Z}$, $x\in\Omega$.
 We say that
\begin{enumerate}
\item a 
homeomorphism $f$ has shadowing on a set \hbox{$V \subset \Omega$};
\item a homeomorphism $f$ has expansivity on a set $U \subset \Omega$;
\end{enumerate}
if the corresponding action $\Phi_f$ has this properties. Note that these definitions are equivalent to classical definitions of these notions.

\begin{definition}
Consider two sets $U, V \subset \Omega$. We say that an uniformly continuous action $\Phi$ is topologically Anosov with respect to the pair $(U, V)$ if the following conditions are satisfied:
\begin{itemize}
\item[(TA1)] there exists $\gamma > 0$ such that $B(\gamma, V) \subset U$;
\item[(TA2)] $\Phi$ has the shadowing property on $V$;
\item[(TA3)] $\Phi$ is expansive on $U$.
\end{itemize}
\end{definition}

\section{Finitely generated groups.}

In this section we will outline basic notions from theory of finitely generated groups,
give relevant definitions, and formulate statements that we use in the
sequel. We refer the interested reader to the following books on group theory:~\cite{Bech, BrHaef, PdH, Kur}.

A group $G$ is called \textit{abelian} if $[g,h]:=ghg^{-1}h^{-1}=e$ for any $g,h\in G$.

\begin{definition}
Any abelian group is called a \textit{nilpotent group of class 1}.
A group $G$ is called \textit{nilpotent of class $n$} if it has the \textit{lower central series} of length~$n$:
\begin{equation}\notag
\label{nilpdef}
G=G_1\rhd \ldots\rhd G_{n+1}=e,\qquad\mbox{where }G_{i+1}=[G_i,G],\quad G_n\neq e
\end{equation}
(as usual, $G_i\rhd G_{i+1}$ means that $G_{i+1}$ is a normal subgroup of $G_i$).
\end{definition}

The simplest nontrivial example of a nilpotent group is a so-called \textit{Heisenberg group} (see \cite{PdH}, \cite{Kur}): $<a,b,c\mid c=[a,b], ac=ca,bc=cb>$.

\begin{definition}
A group is called \textit{virtually nilpotent} if it has a nilpotent normal subgroup of a finite index (i.e. the corresponding factor group is finite).
\end{definition}

\begin{remark}\label{remFinGen}
Note that any subgroup of a finitely generated virtually nilpotent group is finitely generated. In fact the similar statement holds for a more general class of polycyclic groups (see \cite{BrHaef, Seg} for the details).
\end{remark}

Virtually nilpotent groups are important due to the celebrated theorem of Gromov: Any group of polynomial growth is virtually nilpotent. We refer the reader to \cite{Gromov} for the precise statement.

\begin{definition}
A group is called \textit{solvable} or \textit{soluble} if there exists a \textit{subnormal series} (of not necessarily finitely generated groups) $e=G_n\lhd\ldots\lhd G_1\lhd G_0=G$ such that $G_i/G_{i+1}$ is an abelian group. 
\end{definition}

We study Baumslag-Solitar groups (see \cite{PdH}):
$$
BS(m,n)=<a,b\mid ba^m=a^nb>,\quad m,n\in\mathbb{Z},
$$
which are solvable for $m=1$. These groups are well known in group theory as a source of numerous counterexamples.

We study actions of $F_n=<a_1,\ldots,a_n\mid\cdot>$, the \textit{free group} with $n$ generators, which is obviously not solvable.

\section{Main results.}

The following theorem is the main result of our manuscript:

\begin{theorem}\label{thmVirtNilp}
Let $\Phi$ be an uniformly continuous action of a finitely generated virtually nilpotent group $G$ on a metric space $\Omega$. Assume that there exists an element $g \in G$ such that $f_g$ is topologically Anosov with respect to a pair $(U, V)$.
Then the action $\Phi$ is topologically Anosov with respect to the pair~$(U, V)$.
\end{theorem}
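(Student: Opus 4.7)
Conditions (TA1) and (TA3) come almost for free. (TA1) concerns only the pair $(U,V)$ and is inherited verbatim from $f_g$. For (TA3), expansivity of $f_g$ on $U$ is exactly expansivity of $\Phi|_{\langle g\rangle}$ on $U$, and the note immediately following the definition of expansivity then gives expansivity of $\Phi$ on $U$.

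The substance of the theorem is the shadowing property (TA2). My plan is first to reduce to the strictly nilpotent case. Let $N\lhd G$ be a nilpotent normal subgroup of finite index $m=[G:N]$; then $g^m\in N$, and since $f_{g^m}=f_g^m$ inherits both shadowing and expansivity from $f_g$, we may replace $g$ by $g^m$ and assume $g\in N$. By Remark~\ref{remFinGen} the subgroup $N$ is itself finitely generated; fix a finite symmetric generating set $T$ of $N$ that contains $g$. A standard word-length propagation argument (the same mechanism underlying Proposition~\ref{prop}) converts a sufficiently fine $d$-pseudotrajectory of $\Phi$ with respect to $S$ into a $d'$-pseudotrajectory of $\Phi|_N$ with respect to $T$, with $d'\to 0$ as $d\to 0$. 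Once shadowing has been established for $\Phi|_N$, the finitely many coset representatives of $N$ in $G$, combined with uniform continuity, extend the shadowing estimate from $N$ to all of $G$.

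For the nilpotent case, given a sufficiently fine $d$-pseudotrajectory $\{y_h\}_{h\in N}$ lying in $V$, the subsequence $\{y_{g^i}\}_{i\in\ZZ}$ is a pseudotrajectory of $f_g$ on $V$, so applying shadowing of $f_g$ on $V$ produces the natural candidate $x_e$ with $\dist(y_{g^i},f_g^i(x_e))<\ep/2$ for all $i\in\ZZ$. The remaining task is to show $\dist(y_h,f_h(x_e))<\ep$ for every $h\in N$. My plan is: for each fixed $h$, apply shadowing of $f_g$ also to $\{y_{g^i h}\}_{i\in\ZZ}$ to obtain a point $\tilde{x}_h$, then prove $\tilde{x}_h=f_h(x_e)$ by comparing the two trajectories $\{f_g^i(\tilde{x}_h)\}_i$ and $\{f_g^i(f_h(x_e))\}_i=\{f_{g^i h}(x_e)\}_i$ and invoking expansivity of $f_g$ on $U$. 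The first trajectory is $\ep/2$-close to $\{y_{g^i h}\}_i$ by construction; the heart of the matter is controlling the second.

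The main obstacle is precisely this last step, since in a non-abelian nilpotent group $g^i h\neq h g^i$ and the two elements differ by a commutator lying in a lower term of the central series. My approach is induction on the nilpotency class $c$ of $N$. In class $2$ the commutator $[g^i,s]$ is central, so $f_{g^i s}$ splits as $f_s\circ f_{g^i}$ up to a central correction whose effect on trajectories can be handled by an auxiliary shadowing argument along the center. For $c>2$ the correction lies in $[N,N]$, a nilpotent subgroup of smaller class, on which the inductive hypothesis applies. The delicate point throughout is that the required smallness of $d$ as a function of $\ep$ must be uniform in $h$ and $i$; this ultimately rests on the polynomial growth of word lengths of iterated commutators and conjugates in a finitely generated nilpotent group, together with uniform continuity of the finitely many generators $f_t$, $t\in T$.
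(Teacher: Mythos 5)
Your reductions at the outer layers are fine: (TA1) and (TA3) are indeed immediate, passing from $g$ to $g^m\in N$ is the paper's own first move, and the finite-index step can be made to work (though even there you need expansivity of $\Phi|_N$, not just uniform continuity over coset representatives, to identify $x_q$ with $f_q(x_e)$). The genuine gap is in the core of the nilpotent case, at exactly the point you flag as ``the heart of the matter.'' To compare $\{f_g^i(\tilde x_h)\}_i$ with $\{f_{g^ih}(x_e)\}_i$ via expansivity of $f_g$, you must show that $f_{g^ih}(x_e)$ stays uniformly close to $y_{g^ih}$ for all $i$. Writing $g^ih=h_i g^i$ with $h_i=g^ihg^{-i}$, this means applying $f_{h_i}$ to a point that is only $\ep$-close to $y_{g^i}$, where the word length of $h_i$ grows without bound in $i$. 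Uniform continuity of the finitely many generators gives no uniform modulus for a composition of an unbounded (even polynomially growing) number of them --- each composition degrades the modulus, and nothing in the hypotheses (no contraction, no hyperbolicity transverse to $g$) compensates. So ``polynomial growth of word lengths of iterated commutators $+$ uniform continuity'' is not a valid mechanism, and the class-2 ``auxiliary shadowing along the center'' (where the central correction is $f_{[g,s]}^{\,i}$, again iterated unboundedly) is a gesture rather than an argument.

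The paper's proof is organized precisely to avoid this accumulation. Its key step (Lemma~\ref{lemNormSubgroup}) shadows along \emph{entire orbits of a finitely generated normal subgroup} $H$: for each $q\in G$ one gets a point $x_q$ whose whole $H$-orbit is $\ep$-close to $\{y_{hq}\}_{h\in H}$, and the identity $x_{sq}=f_s(x_q)$ is obtained by comparing the two $H$-orbits of $x_{sq}$ and $f_s(x_q)$ against the same pseudotrajectory values, using normality to rewrite $hs=sh'$ so that only a \emph{single} generator $s$ is ever applied to an approximate point; expansivity of $\Phi|_H$ (inherited from $f_g$ since $\langle g\rangle\le H$) then forces equality with no error accumulation. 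The nilpotent case is then an induction on the class via the normal subgroup $P=\langle[G,G],g\rangle$, which Proposition~\ref{nilpprop} shows is nilpotent of class one less. To repair your argument you would essentially have to reproduce this structure: replace expansivity of $f_g$ along $g$-orbits by expansivity of the subgroup action along full subgroup orbits, and organize the induction so that each step applies only boundedly many generators to approximately-shadowed points.
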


\begin{remark} This result generalizes \cite{PilTikh}, where a similar statement was proved for abelian~$G$.
\end{remark}

The main step of the proof is the following lemma, which is interesting by itself:
\begin{lemma}\label{lemNormSubgroup}
Let $G$ be a finitely generated group and $H$ be its finitely generated normal subgroup. Let $\Phi$ be an uniformly continuous action on $\Omega$. If $\Phi|_H$ is topologically Anosov with respect to a pair $(U, V)$, then $\Phi$ is topologically Anosov with respect to the pair $(U, V)$ too.
\end{lemma}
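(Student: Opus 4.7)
The plan is to verify conditions (TA1), (TA2), (TA3) for $\Phi$ with respect to the same pair $(U,V)$ supplied by the hypothesis on $\Phi|_H$. Conditions (TA1) and (TA3) come for free: (TA1) concerns only the sets $U,V$ and the ambient metric, and (TA3) is precisely the remark following the definition of expansivity applied to $H\le G$, since the same constant $\Delta$ that witnesses expansivity of $\Phi|_H$ on $U$ also witnesses expansivity of the larger action $\Phi$ on $U$. All the work goes into (TA2), shadowing on $V$.

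I would fix a finite symmetric generating set $S$ of $G$ chosen to contain a finite symmetric generating set $S_H$ of $H$; this is legitimate by Remark~\ref{remFinGen} (so that $H$ admits such $S_H$) and by Proposition~\ref{prop} (so that the choice of generating set for $\Phi$ is immaterial). Given a $d$-pseudotrajectory $\{y_g\}_{g\in G}\subset V$ of $\Phi$, for each $g_0\in G$ the subsequence $z^{(g_0)}_h:=y_{hg_0}$ is itself a $d$-pseudotrajectory of $\Phi|_H$ lying in $V$, because for $s\in S_H\subseteq S$ and $h\in H$ one has $\dist(z^{(g_0)}_{sh},f_s(z^{(g_0)}_h))=\dist(y_{shg_0},f_s(y_{hg_0}))<d$. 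Applying the shadowing property of $\Phi|_H$ to this subsequence yields a point $x_{g_0}\in\Omega$ with $\dist(y_{hg_0},f_h(x_{g_0}))<\ep_1$ for every $h\in H$, where $\ep_1$ can be made as small as we wish by shrinking $d$.

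The decisive step is to show that the family $\{x_g\}_{g\in G}$ is equivariant, that is $f_s(x_{g_0})=x_{sg_0}$ for every $s\in S$ and every $g_0\in G$. Here normality of $H$ is essential: for $h\in H$ write $hs=sh'$ with $h'=s^{-1}hs\in H$, so that $f_h(f_s(x_{g_0}))=f_s(f_{h'}(x_{g_0}))$. Uniform continuity of $f_s$, with modulus $\omega_s$, combined with the pseudotrajectory bound $\dist(y_{sh'g_0},f_s(y_{h'g_0}))<d$, gives
\[
\dist\bigl(f_h(f_s(x_{g_0})),\,y_{hsg_0}\bigr)<\omega_s(\ep_1)+d.
\]
Thus the $H$-orbits of $f_s(x_{g_0})$ and of $x_{sg_0}$ both shadow $\{y_{hsg_0}\}_{h\in H}$ with small error, and remain inside $U$ by (TA1) as soon as $\ep_1$ and $d$ are below $\gamma$. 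Expansivity of $\Phi|_H$ on $U$ with constant $\Delta$ then forces $f_s(x_{g_0})=x_{sg_0}$, provided $\omega_s(\ep_1)+d+\ep_1<\Delta$. Induction on word length in $S$ upgrades the equivariance to $f_g(x_e)=x_g$ for every $g\in G$, and taking $h=e$ in the shadowing estimate for $x_g$ yields $\dist(y_g,f_g(x_e))=\dist(y_g,x_g)<\ep_1\le\ep$, which is (TA2).

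The main obstacle is the coupled choice of constants. Given $\ep>0$, one first selects $\ep_1<\min(\ep,\gamma,\Delta/2)$, then picks $d$ so small that $\omega_s(\ep_1)+d<\min(\gamma,\Delta/2)$ \emph{simultaneously} for every $s\in S$, and finally shrinks $d$ further so that the shadowing property of $\Phi|_H$ returns error at most $\ep_1$. Finiteness of $S$ is exactly what allows a uniform modulus across all generators, and this is the place where finite generation of $G$ is used decisively; everything else is routine bookkeeping once the equivariance of the family $\{x_g\}$ has been established.
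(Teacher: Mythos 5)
Your proof is correct and follows essentially the same route as the paper's: shadow each $H$-subsequence $\{y_{hg_0}\}_{h\in H}$ to get $x_{g_0}$, use normality ($hs=sh'$) together with uniform continuity of $f_s$ and expansivity of $\Phi|_H$ on $U$ to force $f_s(x_{g_0})=x_{sg_0}$, and propagate along words in $S$. The only nit is the order of the constants: since $\omega_s(\varepsilon_1)$ depends on $\varepsilon_1$ and not on $d$, you must shrink $\varepsilon_1$ itself (not just $d$) until $\omega_s(\varepsilon_1)<\min(\gamma,\Delta/2)$ for every $s\in S$ before choosing $d$ --- exactly what the paper does by first fixing $\delta<\min(\Delta/3,\gamma)$ with $\dist(f_s(\omega_1),f_s(\omega_2))<\Delta/3$ whenever $\dist(\omega_1,\omega_2)<\delta$, and then taking $\varepsilon<\delta$ and $d<\varepsilon$.
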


It turns out that Theorem \ref{thmVirtNilp} cannot be generalized to the case of solvable groups. Consider a solvable group $G = BS(1, n) = \; <a, b| ba = a^nb>$, where $n > 1$. For $\lam >0$ consider the action $\Phi: G \times \RR^2 \to \RR^2$ generated by the maps
$$
f_a(x) = Ax, \quad f_b(x) = B x,
$$
where
$$
A = \left(
      \begin{array}{cc}
        1 & 0 \\
        1 & 1 \\
      \end{array}
    \right), \quad B = \left(
                         \begin{array}{cc}
                           \lam & 0 \\
                           0 & n\lam \\
                         \end{array}
                       \right).
$$
Note that $BA = A^nB$, and hence the action $\Phi$ is well defined.

For any $\lam > 1$ the map $f_b$ is hyperbolic, however the following holds:
\begin{theorem}
\label{ThBS}
\begin{itemize}
  \item[(i)] For $\lambda \in (1, n]$ the action $\Phi$ does not have the shadowing property.
  \item[(ii)] For $\lambda > n$ the action $\Phi$ has the shadowing property.
\end{itemize}
\end{theorem}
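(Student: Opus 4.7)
My plan is to reduce both parts of the theorem to a question about approximate cocycles and then exploit the semidirect-product description $BS(1,n)=\ZZ[1/n]\rtimes\ZZ$. Writing an element as $g=(r,i)$ with $a=(1,0)$ and $b=(0,1)$, one computes the representing matrix
\[
\phi(r,i)=\begin{pmatrix}\lam^i & 0\\ r\lam^i & (n\lam)^i\end{pmatrix},\qquad BA=A^nB.
\]
Writing any pseudo-orbit as $y_g=f_g(y_e)+c_g$ with $c_e=0$, the pseudo-orbit condition becomes the approximate cocycle inequality $|c_{sg}-f_s(c_g)|<d$, and shadowing asks for a single $v\in\RR^2$ with $|c_g-f_g(v)|<\ep$ for every $g$. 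Since $B$ has spectrum $\{\lam,n\lam\}$ outside the unit circle when $\lam>1$, the restriction of the action to $\langle b\rangle$ is a purely expanding $\ZZ$-action, and the standard shadowing argument for expanding linear maps singles out the candidate
\[
v\;:=\;\sum_{k\ge 1}B^{-k}\eta_k,\qquad \eta_k:=c_{b^k}-Bc_{b^{k-1}},
\]
convergent because $\|B^{-k}\|_{\mathrm{op}}=\lam^{-k}$.

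For part (ii) I would verify that this $v$ simultaneously shadows every $c_g$. Any $g=(r,i)$ with $r=j/n^{\ell}$ can be realised as $b^{-\ell}a^j b^{\ell+i}$, hence reached in the Cayley graph along the path consisting of $\ell+i$ $b$-steps, $j$ $a$-steps, and $\ell$ $b^{-1}$-steps. Propagating the pseudo-cocycle errors along this path and using $B^{\ell}AB^{-\ell}=A^{n^\ell}$ to rewrite the shear at depth $\ell$, the shadowing error $c_g-f_g(v)$ reduces to a sum of terms of the form $A^k B^{-\ell}\eta$ with $|\eta|<d$. The key estimate is that $\|B^{-\ell}\|_{\mathrm{op}}=\lam^{-\ell}$ contracts in the direction along which the $a$-shear acts, while each layer of conjugation by $b$ multiplies the shear length by $n$; the two effects combine into a factor $(n/\lam)^{\ell}$, and the total is dominated by $\sum_{\ell\ge 0}(n/\lam)^{\ell}\,d$, convergent precisely when $\lam>n$. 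This yields $|c_g-f_g(v)|\le C(\lam,n)\,d$ uniformly in $g$, and shadowing follows by choosing $d<\ep/C(\lam,n)$.

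For part (i), given $\lam\in(1,n]$ I would produce a $d$-pseudo-orbit that cannot be $\ep$-shadowed for some $\ep$ independent of $d$. The construction exploits the same $(n/\lam)^\ell$ series, now divergent: prescribe first-coordinate errors of a size $\delta$ comparable to $d$ along the $a$-generators and extend them consistently to all of $BS(1,n)$. The relator $bab^{-1}a^{-n}=e$ produces, around every two-cell of the Cayley complex, a linear constraint on the errors; a direct computation shows that this constraint is satisfiable with uniformly bounded $\eta$'s exactly when $\lam\le n$. Evaluating $c_{a^{n^k}}$ along both the direct path $e\to a\to\cdots\to a^{n^k}$ and the conjugate path $b^{-k}\to ab^{-k}\to\cdots\to b^k ab^{-k}=a^{n^k}$ confirms that the first coordinate of $c_{a^{n^k}}$ grows unboundedly in $k$. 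Since $f_{a^{n^k}}(v)=A^{n^k}v$ has first coordinate equal to the constant $v^{(1)}$ independent of $k$, choosing $k$ large enough violates $|c_{a^{n^k}}-f_{a^{n^k}}(v)|<\ep$ for any $v$, and shadowing fails.

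Both halves turn on the sharp quantitative comparison between the contraction rate $\lam^{-1}$ of $B^{-1}$ along the first coordinate and the amplification factor $n$ coming from $bab^{-1}=a^n$. In part (ii) the hardest step is converting the naive polynomial-in-$j$ error bound from the $a$-segment into the summable geometric bound $(n/\lam)^{\ell}d$; this requires routing the estimate through the conjugation $B^{\ell}\!\cdot B^{-\ell}$ rather than treating the $a$-steps in isolation. In part (i) the delicate point is building a globally consistent pseudo-orbit that achieves the divergence in the first coordinate while simultaneously satisfying every loop constraint coming from the Cayley-complex relators.
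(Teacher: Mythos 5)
Your quantitative heuristic --- that everything hinges on comparing the amplification $n$ coming from $bab^{-1}=a^n$ with the contraction $\lam^{-1}$ of $B^{-1}$ on the first coordinate --- is exactly the mechanism of the paper's proof. But both halves of your argument have gaps at the point where that heuristic must be turned into an estimate. For part (ii), the path-propagation bound you claim is not true as stated. Take $g=a^j$ with $j$ a large integer: here $\ell=0$, the path is $j$ consecutive $a$-steps, the prefix matrices $A^{k}$ do not contract the first coordinate at all, and the accumulated error in $P_1(c_{a^j}-A^jv)$ that the path gives is only $O(jd)$, unbounded in $j$; there is no $(n/\lam)^{\ell}$ factor available. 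The fact that $P_1c_{a^j}$ nevertheless stays within $O(\ep)$ of a constant when $\lam>n$ is true, but it cannot be extracted from any single path --- it uses the consistency of the pseudo-orbit across all $b$-levels. The paper's proof supplies exactly this missing ingredient: for \emph{every} coset orbit $\{y_{b^kq}\}_{k\in\ZZ}$ it invokes shadowing and expansivity of the hyperbolic map $f_b$ to produce a unique point $x_q$ with $|y_{b^kq}-B^kx_q|<\ep$ for \emph{all} $k$ simultaneously, and then proves the exact identities $x_{aq}=Ax_q$, $x_{bq}=Bx_q$ by letting $k\to\infty$ in $b^ka=a^{n^k}b^k$, where the error $(2\ep+dn^k)\lam^{-k}$ (first coordinate) and $(2\ep+\tfrac{n^k(n^k+1)}{2}d)(n\lam)^{-k}$ (second coordinate) tend to $0$ precisely because $\lam>n$. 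Your candidate $v=\sum_{k\ge1}B^{-k}\eta_k$ only uses the $b$-orbit through $e$; without the control at all levels $k$ for all cosets $q$, the uniform bound $|c_g-f_g(v)|\le C(\lam,n)d$ does not follow.

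For part (i) your mechanism (comparing $c_{a^{n^k}}$ along the direct path, which caps the first coordinate at $n^kd$, against the conjugate path $b^kab^{-k}$, which can push it to $\lam^kd$, and noting $A^{n^k}v$ has constant first coordinate) is the right one and matches the paper. However, the step ``extend them consistently to all of $BS(1,n)$'' and ``a direct computation shows that this constraint is satisfiable'' is precisely where the work lies, and you do not carry it out. The paper's consistent extension is explicit and nontrivial: it pulls back the auxiliary action $g_a(x,k)=(x+n^{-k},k)$, $g_b(x,k)=(x,k+1)$ on $\RR\times\ZZ$ through the map $F(x,k)=((1+\beta)\lam^k|x|^{\beta},(n\lam)^k|x|^{1+\beta})$ with $\beta=\ln\lam/\ln n$, and the verification that $y_g=\tfrac d3F(\Psi(g,(0,0)))$ is a $d$-pseudotrajectory rests on the subadditivity inequalities $|x+\Delta|^{\beta}\le|x|^{\beta}+|\Delta|^{\beta}$ and its analogue for the exponent $1+\beta$, which hold exactly because $\lam\le n$ forces $\beta\le1$. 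Until you exhibit such a global assignment and check every relator (not just the loops you name), part (i) is a plan rather than a proof.
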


For actions of free groups we prove the following theorem:

\begin{theorem}
\label{thIMElin}
Any linear action of a
finitely generated free group with at least two generators on an Euclidian space does not have the shadowing property.
\end{theorem}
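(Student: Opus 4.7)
\noindent\textbf{Proof plan for Theorem~\ref{thIMElin}.}

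The plan is to argue by contradiction: assume the linear action $\Phi$ of $F_n$ on $\RR^N$, specified by matrices $A_i = f_{a_i}$ ($i=1,\ldots,n$), has the shadowing property, and produce a $d$-pseudotrajectory supported on a single edge of the Cayley tree that no orbit can $\ep$-shadow.

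The first step is to reduce to the case that every $A_i$ is hyperbolic. Given any $\ZZ$-pseudotrajectory $\{z_k\}_{k\in\ZZ}$ of $f_{a_i}$, extend it to an $F_n$-pseudotrajectory $\{y_g\}_{g\in F_n}$ by writing every $g\in F_n$ uniquely as $g = h\cdot a_i^k$, with $h$ reduced and not ending in $a_i^{\pm 1}$, and setting $y_g := A_h z_k$. A case analysis in the Cayley tree shows that the pseudotrajectory inequality for $\{y_g\}$ either is exactly satisfied or reduces to the inequality for $\{z_k\}$ with a loss factor at most $\|A_i^{-1}\|$. Since $y_{a_i^k}=z_k$, any shadow of $\{y_g\}$ yields one of $\{z_k\}$, so shadowing of $\Phi$ forces shadowing of $f_{a_i}$. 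By the classical fact that a linear map on $\RR^N$ has the shadowing property only if it is hyperbolic, every $A_i$ must be hyperbolic.

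Next, fix $\ep>0$, let $d>0$ be the corresponding parameter from the shadowing property, and pick a nonzero $V\in\RR^N$ with $\|V\|$ small enough that $\|V\|<d$ and $\|A_2^{-1}V\|<d$. Define
\[
y_g = \begin{cases} A_h V, & g = h\cdot a_2 \text{ is a reduced concatenation},\\ 0, & \text{otherwise.}\end{cases}
\]
A direct check using reduced-word manipulations shows that the pseudotrajectory inequality $\|y_{sg}-A_s y_g\|<d$ is violated only at the pairs $(s,g)=(a_2,e)$ and $(a_2^{-1},a_2)$, where the errors are $\|V\|$ and $\|A_2^{-1}V\|$ respectively, both below $d$ by the choice of $V$. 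Hence $\{y_g\}$ is a valid $d$-pseudotrajectory.

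Now suppose $x_e\in\RR^N$ is an $\ep$-shadow. For every $n\geq 0$ the elements $a_1^{\pm n}$ are reduced words not ending in $a_2$, so $y_{a_1^{\pm n}}=0$ and the shadowing inequality gives $\|A_1^{\pm n}x_e\|<\ep$ for all $n$. Splitting $x_e$ along the stable/unstable decomposition of $A_1$ (which exists by hyperbolicity) and sending $n\to\infty$ forces each component to vanish, so $x_e=0$. Apply the shadowing inequality now at $g = a_1^{\pm n}a_2$, a reduced concatenation ending in $a_2$: it reads $\|A_1^{\pm n}V\|<\ep$ for all $n\geq 1$, and the same splitting argument forces $V=0$, contradicting $V\neq 0$. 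The main obstacle is not conceptual but a careful tree-combinatorial check that the single-site perturbation at $a_2$ propagates along, and only along, the $a_2$-subtree as claimed, and that the extension step in the reduction to hyperbolic $A_i$ incurs only a bounded multiplicative loss in $d$.
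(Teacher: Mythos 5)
Your proof is correct and follows essentially the same route as the paper: the paper derives Theorem~\ref{thIMElin} from Theorem~\ref{thIME}, whose Item 1 uses exactly your subtree-supported pseudotrajectory (taking $q=a_2$, $\omega_0$ the origin --- a fixed point of the linear action --- and $\omega=V$, so that the paper's $\Phi(t,f_q^{-1}(\omega))$ is your $A_hV$), and whose Item 2 is your reduction showing that shadowing of $\Phi$ forces shadowing, hence hyperbolicity, of each generator. The only organizational difference is that you fold the paper's dichotomy (some element expansive versus some element non-shadowing) into a single linear-algebraic argument, replacing the appeal to expansivity by the equivalent observation that a hyperbolic linear map has no nonzero vector with bounded two-sided orbit.
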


This theorem leads us to the following conjecture and question:

\begin{conjecture}
Any uniformly continuous action of a finitely generated free group with at least two generators on a manifold does not have the shadowing property.
\end{conjecture}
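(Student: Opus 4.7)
The plan is to exploit the tension between the exponential growth of the Cayley graph of $F_n$ (a $2n$-regular tree, $n\geq 2$) and the finite topological dimension of the manifold $M$. Unlike Theorem~\ref{thIMElin}, where linearity permits a direct spectral argument, the challenge here is to manufacture some linear or dimensional invariant out of a purely topological shadowing hypothesis, and then bring it into conflict with the tree structure of $F_n$.

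One route is a reduction to Theorem~\ref{thIMElin} via linearization. Suppose $\Phi\colon F_n\times M\to M$ is a uniformly continuous action with shadowing. First, pass to a compact invariant piece: using shadowing of any bounded pseudotrajectory, one gets an exact orbit whose closure is $F_n$-invariant and compact; after restricting we may assume $M$ is compact. Next, linearize at a suitable recurrent point $x_0$: in the $C^1$ case form the derivative cocycle $g\mapsto Df_g(x_0)$ on $T_{x_0}M\cong\RR^{\dim M}$; in the merely continuous case substitute a blow-up limit of $\Phi$ in local charts, or use expansivity to extract stable/unstable splittings in the spirit of topological Anosov systems. Then transfer the shadowing of $\Phi$ near $x_0$ to a shadowing property of the resulting linear/tangent model on a finite-dimensional space, and invoke Theorem~\ref{thIMElin} to obtain a contradiction.

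A complementary strategy is a direct construction of an unshadowable pseudotrajectory that uses only the tree structure and uniform continuity. For a fixed chart around a recurrent point, associate to each oriented edge of the Cayley graph of $F_n$ a small translation, and for each $g\in F_n$ let $y_g$ be $f_g(x_0)$ shifted (in the chart) by the sum of translations along the geodesic from $e$ to $g$. By the tree property these shifts can be chosen independently on each of the $2n-1$ branches emanating from any vertex, producing a genuine $d$-pseudotrajectory of $\Phi$ whose set of ``branch configurations'' at level $k$ has cardinality of order $(2n-1)^k$. A shadowing point $x_e$ would have to track all of these branches simultaneously with error $\ep$; a counting or volume comparison, pitting the exponential branching of $F_n$ against the polynomial (in $k$) volume of $\ep$-balls in a $\dim M$-dimensional chart, should then close the contradiction.

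The main obstacle is handling purely topological (non-smooth) actions in the linearization route: without a derivative one must invent a canonical substitute tangent object that (i) exists for arbitrary uniformly continuous $\Phi$ and (ii) inherits shadowing in a quantitatively provable way, and the absence of any $\Phi$-invariant measure in full generality blocks Lyapunov-exponent techniques that would otherwise feed Theorem~\ref{thIMElin}. In the direct-construction route, the obstacle is quantitative: one must ensure that the cumulative perturbations stay uniformly below the $d$-threshold for a pseudotrajectory (using the modulus of continuity of each $f_s$, $s\in S$) while still being large enough that no single $\Phi$-orbit can $\ep$-shadow the exponentially branching family of near-orbits. Overcoming either obstacle seems to require genuinely new ideas, which is why the present paper records the statement only as a conjecture.
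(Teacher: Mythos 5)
This statement is recorded in the paper only as a conjecture: the authors give no proof of it, and your proposal does not supply one either. You candidly say as much in your last sentence, so the honest assessment is that both of your suggested routes contain genuine gaps, and it is worth naming them precisely. The paper's actual results on free groups (Theorem~\ref{thIME}) cover exactly two situations: some $f_g$ is expansive, or some $f_g$, $g\ne e$, fails to have shadowing. The conjecture is precisely about the residual case --- every nontrivial element acts with shadowing and no element acts expansively --- and neither of your strategies engages that case with a workable mechanism.

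Concretely: in the linearization route, a uniformly continuous action has no derivative cocycle, and even for a $C^1$ action there is no implication from shadowing of $\Phi$ to shadowing of a tangent or blow-up model at a point; moreover Theorem~\ref{thIMElin} concerns actions by globally linear maps of $\RR^m$, not linear cocycles over an orbit, so even a successful linearization would not feed into it. The paper's own derivation of Theorem~\ref{thIMElin} uses only that, for linear maps, shadowing and expansivity are each equivalent to hyperbolicity --- a dichotomy that simply has no analogue for general homeomorphisms of a manifold. In the direct-construction route, the counting comparison does not type-check: the quantity ``$(2n-1)^k$ branch configurations'' counts how many \emph{different} pseudotrajectories you could build, while the definition of shadowing only requires one shadowing point per pseudotrajectory, chosen after the pseudotrajectory is fixed. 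Exponential branching of the tree therefore produces no contradiction by itself; in the paper's proof of Item 1 of Theorem~\ref{thIME} the contradiction is closed not by counting but by expansivity of a single $f_g$, which forces the one shadowing point of a single carefully built pseudotrajectory to coincide with two distinct points. Absent expansivity (or some substitute rigidity), your construction leaves open the possibility that some $x_e$ tracks all branches at once, which is exactly why the statement remains open.
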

\begin{question}
Which groups admit an action on a manifold satisfying the shadowing property?
\end{question}

We derive Theorem \ref{thIMElin} from the following more general, but more technical statement:

\begin{theorem}
\label{thIME}
Let $G$ be a finitely generated free group with at least two generators. Let $\Phi$ be an uniformly continuous action of $G$ on a non-discrete metric space $\Omega$.
\begin{enumerate}
	\item If for some $g\in G$ the map $f_{g}$ is expansive, then $\Phi$ does not have shadowing.
  \item If for some $g\in G$, $g \ne e$, the map $f_{g}$ does not have shadowing, then $\Phi$ does not have shadowing too.
\end{enumerate}
\end{theorem}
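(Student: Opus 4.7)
The strategy is a \emph{coset-lifting} construction: embed a $\ZZ$-pseudotrajectory of $f_g$ into a $G$-pseudotrajectory of $\Phi$, exploiting the tree structure of the Cayley graph of the free group. By Proposition~\ref{prop} the generating set may be enlarged to contain $g$. Since $G$ has rank $\geq 2$, for any $b\in G$ outside the centralizer of $g$ the subgroup $\langle g,b\rangle$ is free of rank $2$ with $\{g,b\}$ as a free basis (by the Hopfian property of free groups and Nielsen--Schreier). A routine coset-lifting argument shows that if $\Phi|_{\langle g,b\rangle}$ fails to shadow, neither does $\Phi$; thus I reduce to $G=F_2=\langle g,b\rangle$ with $S=\{g^{\pm 1},b^{\pm 1}\}$. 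Let $T\subset G$ be the set of reduced words not ending in $g^{\pm 1}$ (including the empty word~$e$); then each $h\in G$ has a unique decomposition $h=tg^k$ with $t\in T$, $k\in\ZZ$. A direct case analysis on $(s,t)\in S\times T$ shows the reduced product $st$ again lies in $T$ in every case, except when $s\in\{g^{\pm 1}\}$ and $t=e$; in that exception $sh=g^{k\pm 1}$ has representative $e$ and $g$-exponent $k\pm 1$.

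This yields the \emph{Coset-Lifting Lemma}: for any $d$-pseudotrajectory $\{z_k\}_{k\in\ZZ}$ of $f_g$, the sequence $y_h:=f_t(z_k)$ (with $h=tg^k$) is a $d$-pseudotrajectory of $\Phi$. In every stable case $y_{sh}=f_{st}(z_k)=f_s(y_h)$ exactly; in the exceptional case $\dist(y_{sh},f_s(y_h))=\dist(z_{k\pm 1},f_g^{\pm 1}(z_k))<d$ by the $f_g$-pseudotrajectory bound.

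\emph{Part (2)} then follows immediately: let $\ep_0$ witness non-shadowing of $f_g$, and for any $d>0$ pick a $d$-pseudotrajectory $\{z_k\}$ of $f_g$ not $\ep_0$-shadowed; lift it to $\{y_h\}$. If $\{y_h\}$ were $\ep_0$-shadowed by some $p\in\Omega$, restricting to $h=g^k$ (where $y_{g^k}=z_k$) would give $\dist(z_k,f_g^k(p))<\ep_0$ for every $k\in\ZZ$, a contradiction. For \emph{Part (1)} I argue by contradiction: if $\Phi$ had shadowing, Part~(2) applied contrapositively forces $f_g$ itself to shadow, so $f_g$ is topologically Anosov with expansivity constant $\Delta$. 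Fix $\ep<\Delta$ small enough that $\omega_b(\ep)<\Delta$, where $\omega_b$ is a modulus of continuity for $f_b^{-1}$, and let $d$ be the corresponding shadowing constant of $\Phi$. Using non-discreteness of $\Omega$, pick distinct $x_1,x_2$ with $\dist(x_1,x_2)$ arbitrarily small. The aim is to construct a $d$-pseudotrajectory $\{y_h\}$ of $\Phi$ such that $y_{g^k}$ stays close to $f_g^k(x_1)$ and $y_{bg^k}$ stays close to $f_b(f_g^k(x_2))$ for every $k\in\ZZ$: any $\ep$-shadow $p$ of such a $\{y_h\}$ would be forced by expansivity of $f_g$ on $\langle g\rangle$ to equal $x_1$, and by expansivity applied to $b\langle g\rangle$ (after using uniform continuity of $f_b^{-1}$) to equal $x_2$, yielding $x_1=x_2$, a contradiction.

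The hard part is actually producing such a $\{y_h\}$ satisfying the $d$-pseudotrajectory condition on all of $G$. Because $f_g$ is expansive, the orbits of $x_1$ and $x_2$ under $f_g$ necessarily diverge, so a naive assignment of exact orbits on the two distinguished cosets violates the pseudotrajectory bound at their Cayley-graph boundary. The resolution exploits the branching of the Cayley graph of $F_2$: the transition between the two regimes is routed through auxiliary cosets, carrying an interpolating $f_g$-pseudotrajectory that absorbs the orbital divergence in regions that impose no direct shadowing constraint on either $\langle g\rangle$ or $b\langle g\rangle$.
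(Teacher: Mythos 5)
Your Coset-Lifting Lemma is correct, and for Item 2 it is essentially the paper's own construction in the special case where $g$ belongs to a free basis: decompose $h=tg^{k}$ relative to the axis of $g$ and push the $\ZZ$-pseudotrajectory out along the branches, so that the only non-exact edges are those along $\langle g\rangle$ itself. The problem is the reduction that precedes it. The claim that ``if $\Phi|_{\langle g,b\rangle}$ fails to shadow, neither does $\Phi$'' is not routine: one must extend a pseudotrajectory indexed by $H=\langle g,b\rangle$ with respect to $\{g^{\pm1},b^{\pm1}\}$ to one indexed by $G$ with respect to a free basis of $G$, and since $g$ is in general a long word $s_r\cdots s_1$ in that basis, consecutive elements $h$ and $gh$ of $H$ are far apart in the Cayley tree; one has to interpolate along the letters of $g$ and control how the error $d$ is amplified by the moduli of continuity of $f_{s_1},\dots,f_{s_r}$. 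That interpolation (the auxiliary sequence $z_{rk+j}$ together with the estimate \eqref{importref}) is exactly the technical content of the paper's proof of Item 2; your proposal relocates it into the word ``routine'' without supplying it. For primitive $g$ your argument is complete; for general $g\neq e$ it is not.

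Item 1 contains the genuine gap. You aim for a $d$-pseudotrajectory with $y_{g^k}\approx f_g^k(x_1)$ and $y_{bg^k}\approx f_b(f_g^k(x_2))$ for all $k$ and for two \emph{distinct} points $x_1,x_2$. But the pseudotrajectory condition \eqref{dpstdef} directly imposes $\dist(y_{bg^k},f_b(y_{g^k}))<d$ for \emph{every} $k$, since $bg^k$ is obtained from $g^k$ by left multiplication by the generator $b$; by uniform continuity of $f_b^{-1}$ this forces $\sup_k\dist(f_g^k(x_1),f_g^k(x_2))$ to be small, and expansivity of $f_g$ then forces $x_1=x_2$. These constraining edges occur at every $k$, so no routing through auxiliary branches of the Cayley graph can avoid them: the pseudotrajectory you need does not exist. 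The paper's construction is essentially different. It separates the two regimes by a \emph{right} translate: with $q$ a generator distinct from $(s_1)^{\pm1}$, it compares $\{y_{g^k}\}$ (the exact $f_g$-orbit of $f_q^{-1}(\omega_0)$) with $\{y_{g^kq}\}$ (the exact $f_g$-orbit of a \emph{nearby} point $\omega$); the sets $\{g^k\}$ and $\{g^kq\}$ diverge in the tree, so the only defective edge is the single edge between $e$ and $q$, whose defect is $\dist(\omega,\omega_0)<d$. Expansivity is then applied twice to the shadowing point $x_e$, once along $\{g^k\}$ giving $x_e=f_q^{-1}(\omega_0)$ and once along $\{g^kq\}$ giving $f_q(x_e)=\omega$, whence $\omega=\omega_0$, contradicting the choice of two distinct points. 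Note that this argument uses only expansivity of $f_g$ and needs neither shadowing of $f_g$ nor your Part 2.
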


\begin{remark}
\label{remIME}
Item 1 of Theorem \ref{thIME} holds for a more general class of groups with infinitely many ends (we refer the reader to \cite{BrHaef} for the precise definition).
\end{remark}

\begin{proof}[Proof of Theorem \ref{thIMElin}.]
Since for linear actions of $\mathbb{Z}$ both shadowing and expansivity are equivalent to hyperbolicity, Theorem \ref{thIMElin} follows from Theorem~\ref{thIME}.
\end{proof}

\section{Actions of nilpotent groups.}
We start from the proof of Lemma \ref{lemNormSubgroup}.

\begin{proof}[Proof of Lemma \ref{lemNormSubgroup}]
Fix a finite symmetric generating set $S_H$ in $H$ and continue it to a finite symmetric generating set $S$ in $G$. By Proposition \ref{prop}, we can assume that our initial generating set $S$ was chosen in this way.

Let $\Delta, \gamma > 0$ be the constants from the definitions of a topologically Anosov action and expansivity for $\Phi|_H$.
Since the maps $\{f_s\}_{s \in S}$ are uniformly continuous, there exists $\delta < \min(\Delta/3, \gamma)$ such that
\begin{equation}\label{eqUnCont}
\dist(f_s(\omega_1), f_s(\omega_2)) < \Delta/3
\end{equation}
for any $s \in S$ and any two points $\omega_1, \omega_2 \in \Omega$ satisfying $\dist(\omega_1, \omega_2) < \delta$.

Fix $\ep \in (0, \delta)$ and choose $d < \ep$ from the definition of shadowing for $\Phi|_H$ for the generating set $S_H$. Fix a $d$-pseudotrajectory $\{y_g \in V\}_{g\in G}$ of~$\Phi$.

For any element $q\in G$ consider the sequence $\{z_h = y_{hq}\}_{h\in H}$. Note that this sequence is a $d$-pseudotrajectory of $\Phi|_{H}$. Since $\Phi|_{H}$ is topologically Anosov with respect to $(U, V)$, there exists a unique point $x_q\in U$ such that
\begin{equation}
\label{shad1}
\dist(z_h,\Phi(h,x_q))=\dist(y_{hq},f_{h}(x_q)) < \ep, \quad h \in H.
\end{equation}
Existence of such $x_q$ follows from (TA2), uniqueness follows from (TA1), (TA3), and the inequality $\ep < \gamma$.

Let us prove that $\{x_q\}_{q\in G}$ is an exact trajectory.

Fix $s \in S$ and $q \in G$. Consider an arbitrary element $h\in H$. Since $H$ is a normal subgroup of $G$, there exists an element $h'\in H$ such that
\begin{equation}
\label{connect}
sh'=hs.
\end{equation}

It follows from \sref{eqUnCont}--\sref{connect} that
\begin{equation}
\label{ineq1}
\mbox{dist}(y_{sh'q},f_{h}(x_{sq})) < \epsilon,
\end{equation}
\begin{equation}
\label{shad2}
\mbox{dist}(f_s(y_{h'q}),f_s(f_{h'}(x_q))) < \Delta/3.
\end{equation}

Since $\{y_g\}_{g\in G}$ is a $d$-pseudotrajectory for $\Phi$, it follows from \sref{connect}--\sref{shad2} that
\begin{multline*}
\dist(f_{h}(x_{sq}),f_{h}(f_s(x_q)))\leq \\ \dist(f_{h}(x_{sq}), y_{sh'q}) + \dist(y_{sh'q}, f_{s}(y_{h'q}))+
\dist(f_{s}(y_{h'q}), f_{hs}(x_q)))\leq \\ \epsilon + d + \Delta/3 < \Delta.
\end{multline*}

Due to expansivity of $\Phi|_{H}$ on $U$, we conclude that
\begin{equation}\notag
\label{goal}
x_{sq} = f_s(x_q), \quad s \in S, q \in G.
\end{equation}
Since $S$ is a generating set for $G$, these equalities imply that
$x_q = f_q(x_e)$, for  all $q \in G$, and hence by~\sref{shad1}  $x_e$ satisfies inequalities \sref{eqDefSh}.

Expansivity of $\Phi$ is trivial.
\end{proof}



Next we prove Theorem \ref{thmVirtNilp} for the case of nilpotent groups.
\begin{lemma}\label{lemNilp}
Let $G$ be a finitely generated nilpotent group of class $n$ and $\Phi$ be an uniformly continuous action of $G$ on a metric space $\Omega$. Assume that there exists an element $g\nobreak \in\nobreak G$ such that $f_g$ is topologically Anosov with respect to $(U, V)$.
Then the action $\Phi$ is topologically Anosov with respect to $(U, V)$.
\end{lemma}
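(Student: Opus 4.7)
The plan is to iterate Lemma~\ref{lemNormSubgroup} along an explicit subnormal chain that climbs from $\langle g\rangle$ up to $G$, built from the upper central series of~$G$.

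First, by Remark~\ref{remFinGen} every subgroup of $G$ is finitely generated. Moreover, since every element of any subgroup $H\le G$ is a word in a fixed symmetric generating set $S$ of $G$, and each $f_s$ with $s\in S$ is uniformly continuous, the restriction $\Phi|_H$ is automatically uniformly continuous. Consequently every intermediate subgroup that appears below meets the hypotheses of Lemma~\ref{lemNormSubgroup}.

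Next I would construct the chain. Let $\{e\}=Z_0\subset Z_1\subset\ldots\subset Z_n=G$ be the upper central series of $G$, and set
\[
K_i=\langle g\rangle\cdot Z_i,\qquad i=0,1,\ldots,n.
\]
Normality of each $Z_j$ in $G$ together with the defining inclusion $[Z_{i+1},G]\subset Z_i$ of the upper central series yields, after a short commutator computation, that $K_i$ is a subgroup of $G$ and that $K_i\lhd K_{i+1}$ for every $i$. Clearly $K_0=\langle g\rangle$ and $K_n=G$.

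Finally, I propagate the topologically Anosov property up this chain. By hypothesis, $\Phi|_{K_0}$ is nothing but the $\ZZ$-action $\Phi_{f_g}$, which is topologically Anosov with respect to $(U,V)$. Assuming inductively that $\Phi|_{K_i}$ is topologically Anosov with respect to $(U,V)$, Lemma~\ref{lemNormSubgroup} applied to the inclusion $K_i\lhd K_{i+1}$ of finitely generated groups yields the same conclusion for $\Phi|_{K_{i+1}}$; after $n$ iterations we obtain the claim for $\Phi|_{K_n}=\Phi$. The only potentially tricky step is the purely group-theoretic verification that the $K_i$ form a subnormal chain, but this is routine commutator bookkeeping from $[Z_{i+1},G]\subset Z_i$; all of the analytic content of the lemma is already packaged into Lemma~\ref{lemNormSubgroup}, which is used here as a black box.
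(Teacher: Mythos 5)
Your proof is correct, and the group-theoretic step you defer does check out: since $Z_i\lhd G$, each $K_i=\langle g\rangle Z_i$ is a subgroup; writing $x=g^bw$ with $w\in Z_{i+1}$ and $y=g^az$ with $z\in Z_i$, the inclusion $[Z_{i+1},G]\subset Z_i$ gives $wg^aw^{-1}\in Z_i\,g^a$, hence $xyx^{-1}\in \langle g\rangle Z_i=K_i$ and $K_i\lhd K_{i+1}$; moreover $K_0=\langle g\rangle$ and $K_n=G$ because $Z_n=G$ for a class-$n$ nilpotent group. The overall strategy --- climb a subnormal chain from $\langle g\rangle$ to $G$ and apply Lemma~\ref{lemNormSubgroup} at each step --- is the same as the paper's, but your chain is genuinely different. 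The paper argues by induction on the class: it sets $P=\langle [G,G],g\rangle$ and must prove (Proposition~\ref{nilpprop}) both that $P\lhd G$ and that $P$ is nilpotent of class at most $n-1$; the authors explicitly point out that this class drop is not automatic, since a class-$n$ nilpotent group can have proper subgroups of class $n$. Your upper-central-series chain sidesteps that issue entirely: you never need the intermediate groups to be nilpotent of smaller class, only subnormal, which follows in one line from $[Z_{i+1},G]\subset Z_i$ (this is the standard fact that every subgroup of a class-$n$ nilpotent group is subnormal of defect at most $n$). The bookkeeping points you mention --- finite generation of the $K_i$ via Remark~\ref{remFinGen} and uniform continuity of the restricted actions --- are handled adequately. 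One small item worth a sentence in a final write-up: if $g$ has finite order, $\Phi|_{K_0}$ is an action of a finite cyclic group rather than of $\mathbb{Z}$, so identifying it with $\Phi_{f_g}$ needs a remark (its pseudotrajectories lift to $g$-periodic pseudotrajectories of $f_g$); the paper's own base case elides the same point, so this is not a defect relative to the original argument.
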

\begin{proof}
Let us prove this lemma by induction on $n$.

For $n = 1$ the group $G$ is abelian and hence the group $P = \left<g\right>$ generated by $g$ is a normal subgroup of $G$. Since $f_g$ is topologically Anosov, applying Lemma~\ref{lemNormSubgroup} we conclude that $\Phi$ has the shadowing property.

Let $n > 1$ and assume that we have proved the lemma for all nilpotent groups of class less or equal $n-1$. Denote $Q = [G, G]$ and $P = \left< Q, g \right>$ (i.e. $P$ is the minimal subgroup of $G$ that contains $Q$ and $g$).
\begin{proposition}
\label{nilpprop}
\begin{itemize}
  \item[(N1)] The group $P$ is a normal subgroup of $G$.
    \item[(N2)] The group $P$ is nilpotent of class at most $n-1$.
\end{itemize}
\end{proposition}

\begin{proof}[Proof of Proposition \ref{nilpprop}.]
Let us start from Item (N1). Fix arbitrary $p\in P$, $h\in G$. Note that $hph^{-1}\in [G,G]p=Qp\subset P$, which proves the claim.

Let us prove Item (N2).
It is clear that any subgroup of a nilpotent group of class $n$ is a nilpotent group of class at most $n$. However we need a stronger result for the subgroup~$P$. As the analysis of simple examples shows (e.g. the direct product of the Heisenberg group and $\mathbb{Z}$), a nilpotent group of class $n$ may have proper subgroups of class $n$. So item (N2) is not trivial.

Denote
\begin{equation}\notag
\label{qdef}
R = [Q,G]=[[G,G],G].
\end{equation}
Clearly, in order to prove (N2) it is sufficient to prove that
\begin{equation}\notag
\label{tempgoal}
[P,P]\subset [[G,G],G] = R
\end{equation}
(since it implies $[[P,P],P]\subset [[[G,G],G],G]$ and etc.).

Since $Qg=gQ$, any element $p\in P$ has a representation as $qg^k$ for some $q\in Q$, $k\in\mathbb{Z}$. Fix $p_1,p_2\in P$ and put $p_1=q_1g^{k_1}, p_2=q_2g^{k_2}$.
Note that
$$
p_1p_2=q_1g^{k_1} q_2g^{k_2} = q_1r_1q_2g^{k_1+k_2} = r_2q_1q_2g^{k_1+k_2},
$$
\begin{equation}\notag
p_2p_1 = q_2g^{k_2}q_1g^{k_1} = q_2r_3q_1g^{k_1 + k_2} = r_4q_2q_1g^{k_1+k_2} = r_5q_1q_2g^{k_1+k_2}
\end{equation}
for some $r_1, \dots, r_5 \in R$, and hence $[p_1,p_2] = r_2r_5^{-1}\in R.$



\end{proof}

Let us note that these properties strongly use nilpotency of $Q$, and their analogs do not hold, for example, for solvable groups.

Let us continue the proof of Lemma \ref{lemNilp}. Since $P$ is a finitely generated (due to Remark~\ref{remFinGen}) nilpotent group of class at most $n-1$, $g \in P$, and $f_g$ is topologically Anosov, by the induction assumptions we conclude that $\Phi|_P$ is topologically Anosov. Combining this property, (N1) and Lemma \ref{lemNormSubgroup} we conclude that $\Phi$ has the shadowing property.
\end{proof}

\begin{proof}[Proof of Theorem \ref{thmVirtNilp}]
Since $G$ is virtually nilpotent, there exists a nilpotent normal subgroup $H$ of $G$ of finite index. Due to Remark \ref{remFinGen} the group $H$ is finitely generated. Consider $g\in G$ from the assumptions of the theorem. Since $H$ is a subgroup of finite index, there exists $k > 0$ such that $g^k \in H$. Since $f_g$ is topologically Anosov, the map $f_g^k = f_{g^k}$ is also topologically Anosov. Hence, by Lemma \ref{lemNilp}, the action $\Phi|_{H}$ is topologically Anosov. Applying Lemma \ref{lemNormSubgroup} we conclude that $\Phi$ is topologically Anosov~too.
\end{proof}

\section{An action of a Baumslag-Solitar group.}

\begin{proof}[Proof of Theorem \ref{ThBS}]

Without loss of generality, by Proposition~\ref{prop}, we consider the group $BS(1,n)=\allowbreak<~a,b\mid ba=a^nb>$ with the standard generating set $S=\{a,b,a^{-1},b^{-1}\}$. Denote by $P_1$ and $P_2$ the natural projections on the coordinate axes in $\RR^2$. As before denote
$$
A=\left(
\begin{array}{cc} 1 & 0 \\ 1 & 1 \end{array}
\right),
\quad
B=\left(
\begin{array}{cc} \lambda & 0 \\ 0 & n\lambda \end{array}
\right).
$$
Note that
\begin{equation}
\label{matproducts}
A^r=\left(
\begin{array}{cc} 1 & 0 \\ r & 1 \end{array}
\right),
\quad
B^r=\left(
\begin{array}{cc} \lambda^r & 0 \\ 0 & (n\lambda)^r \end{array}
\right),\qquad  r\in \ZZ.
\end{equation}

\textbf{Proof of Item (i).} To derive a contradiction assume that $\Phi$ has shadowing and choose $d > 0$ from the definition of the shadowing property applied to $\epsilon=1$.

Consider an auxiliary action $\Psi:  G \times (\RR \times \ZZ) \to (\RR \times \ZZ)$ generated by the maps
$$
g_a(x, k) = (x + n^{-k}, k), \quad g_b(x, k) = (x, k+1).
$$
It is easy to check that $g_b \circ g_a = g_a^n \circ g_b$, and hence the action $\Psi$ is well defined.

Consider the map $F : (\RR \times \ZZ) \to \RR$ defined as follows
$$
F(x, k) = \left((1+ \beta)\lam^k |x|^{\beta}; (n\lam)^k |x|^{1+\beta}\right),
$$
where $\beta = \frac{\ln \lam}{\ln n} \in (0, 1]$.

Finally, consider the sequence
$$
y_g = \frac{d}{3} \cdot F(\Psi(g, (0, 0))), \quad g \in G.
$$
We claim that $\{y_g\}_{g\in G}$ is a $d$-pseudotrajectory for the action $\Phi$, i.e. inequalities $(\ref{dpstdef})$ hold for $s\in \{a,b,a^{-1},b^{-1}\}$.

Indeed, fix $g \in G$. Denote $(x, k) = \Psi(g, (0, 0))$.

If $s = b^{\pm 1}$, then it is easy to see that $y_{sg} = f_s(y_g)$.

If $s = a$, then
$$
P_1 y_{sg} = \frac{d}{3}\left(1+ \beta\right)\lam^k \left|x + n^{-k}\right|^{\beta},\quad P_2 y_{sg} = \frac{d}{3}\left(n\lam\right)^k \left|x+ n^{-k}\right|^{1+\beta}.$$

Denote $\Delta = n^{-k}$. Then $\lam^k = \Delta^{-\beta}$. In such notation
$$P_1\left(y_{ag} - f_a(y_g)\right) = \frac{d}{3}\left(1+ \beta\right)\Delta^{-\beta}\left(\left|x+\Delta\right|^{\beta} - \left|x\right|^{\beta}\right),$$
$$P_2\left(y_{ag} - f_a(y_g)\right) = \frac{d}{3}\left(\Delta^{-(1+ \beta)}\left|x+\Delta\right|^{1+\beta} - \left(\Delta^{-(1+ \beta)}|x|^{1+\beta} + \left(1+ \beta\right)\Delta^{-\beta} |x|^{\beta}\right)\right).$$

We use the following inequalities, which hold for $\beta \in (0, 1]$ and $x, \Delta \in \RR$,:
$$
|x+ \Delta|^{\beta} \leq |x|^{\beta} + |\Delta|^{\beta},
$$
$$
|x+\Delta|^{1+\beta} \leq |x|^{1+\beta} + (1+\beta)|\Delta||x|^{\beta} + |\Delta|^{1+\beta}.
$$
From these inequalities it is easy to conclude that

$$
|P_1(y_{ag} - f_a(y_g))| \leq (\beta + 1)d/3, \quad  |P_2(y_{ag} - f_a(y_g))| \leq d/3
$$
which implies
$$
|y_{ag} - f_a(y_g)| < d.
$$
Similarly
$$
|y_{a^{-1}g} - f_{a^{-1}}(y_g)| < d.
$$
And hence $\{y_g\}_{g\in G}$ is a $d$ pseudotrajectory.









Since by our assumptions the action $\Phi$ has the shadowing property, there exists $x_e \in \RR$ such that
$(\ref{eqDefSh})$ holds.

Note that $y_{b^k} = 0$ for $k \geq 0$. Substituting $g = b^k$ into (\ref{eqDefSh}), we conclude that
$
|B^kx_e|\leq 1
$
and hence, by expansivity of $f_b$, $x_e = (0, 0)$.

Now substituting $g = b^k a$ into $(\ref{eqDefSh})$ and looking on the first coordinate we conclude that:
$$
|\lambda^kd/2 - 0|<1, \quad k  \geq 0,
$$
which is impossible for sufficiently large $k$.
The derived contradiction finishes the proof of Item (i).

\medskip

\textbf{Proof of Item (ii).} Fix $\ep > 0$. Note that the map $f_b$ has shadowing and expansivity. Let us choose $d \in (0, \ep)$ such that any $d$-pseudotrajectory of $f_b$ can be $\ep$-shadowed by an exact trajectory of $f_b$. Consider an arbitrary $d$-pseudotrajectory $\{y_g\}_{g \in G}$ of the action~$\Phi$.

For any element $q\in G$ consider the sequence $\{z_k\}_{k\in \ZZ}$, defined by $z_k= y_{b^k q}$. Note that this sequence is a $d$-pseudotrajectory for $f_b$. Since $f_b$ has shadowing and expansivity, there exists a unique point $x_q\in \RR$ such that
\begin{equation}
\label{BSshad1}
|z_k - f_b^k(x_q)|=|y_{b^k q}-f_{b}^k(x_q)| < \ep, \quad k \in \ZZ.
\end{equation}
We claim that $x_q = \Phi(q, x_e)$. To prove this, it is enough to show that
\begin{equation}\label{eqBSshad2}
x_{bq} =
Bq,
\quad x_{aq} =
A x_q, \qquad q \in G.
\end{equation}
The first equality follows directly from expansivity of $f_b$. Let us prove the second one.

Note that the relation $ba = a^n b$ implies that \begin{equation}\label{eqGroupRel}
b^ka = a^{(n^k)}b^k, \quad k > 0.
\end{equation}
Fix an arbitrary $q \in G$.  Note that since $\{y_t\}_{t\in G}$ is a $d$-pseudotrajectory,
\begin{equation}
\label{proj1}
|P_1 y_{a^{n^k}b^kq} - P_1 y_{b^kq}| < dn^k.
\end{equation}
By a straightforward induction it is easy to show that for $j \in [1, n^k]$ the inequality
$$
|P_2 y_{a^{n^k}b^kq} - P_2y_{(a^{n^k-j})b^kq} - jP_1y_{(a^{n^k - j})b^kq}|< \frac{j(j+1)}{2}d
$$
holds.
In particular
\begin{equation}
\label{proj2}
|P_2 y_{a^{n^k}b^kq} - P_2 y_{b^kq} - n^kP_1y_{b^kq}| < \frac{n^k(n^k+1)}{2}d.
\end{equation}

Relations (\ref{BSshad1}), (\ref{eqGroupRel}), and the definition of a pseudotrajectory imply that for any $k > 0$ the following relations hold:
$$
\left|B^kx_{aq} - y_{b^kaq}\right| < \ep,
$$
$$
y_{b^kaq} = y_{a^{(n^k)}b^kq},
$$
$$
\left|y_{b^kq} - 
B^k
x_q\right| < \ep,
$$
and hence by $\sref{matproducts}$, $\eqref{proj1}$ and $\eqref{proj2}$
$$
|\lam^k (P_1x_{aq} - P_1x_q)|<2\ep + dn^k,\quad k>0,
$$
$$|(n\lam)^k P_2 x_{aq} - (n\lam)^k P_2 x_{q} - (n\lam)^k P_1 x_q|<2\ep + \frac{n^k(n^k+1)}{2}d, \quad k>0.$$
Since $\lam > n$,
$$P_1 x_{aq} = P_1 x_q,\quad P_2 x_{aq} = P_2 x_{q} + P_1 x_q,$$
which implies~\sref{eqBSshad2} and finishes the proof of Item (ii).

\end{proof}

\section{Actions of free groups.}

Without loss of generality, by Proposition 1, we consider a free group $G=\allowbreak=<~a_1,\ldots,a_n|\cdot>$ with the standard generating set $S = \{a_1^{\pm 1},\ldots,a_n^{\pm 1}\}$.
It means that any element $g\in G$ has a normal form $g=s_r\ldots s_1$ (where $s_j\in S$), i.e. the unique shortest representation in terms of elements of $S$.

\begin{proof}[Proof of Theorem \ref{thIME}.]

\textbf{Proof of Item 1}.
 To derive a contradiction, suppose that $\Phi$ has shadowing. Let $d$ be the number that corresponds to $\epsilon=\Delta$ (the constant of expansivity of~$f_g$) in the definition of shadowing for $\Phi$.

Consider the normal form of $g$: $g=s_r\ldots s_1$.
Fix any $q\in S\backslash \{s_1, s_1^{-1}\}$.
Since $f_{q}^{-1}$ is uniformly continuous, there exists a number $d_1<d$ such that
\begin{equation}
\label{unifcont}
\mbox{dist}(f_{q}^{-1}(w_1),f_{q}^{-1}(w_2))< d,
\end{equation}
for any $w_1,w_2\in\Omega$ satisfying $\mbox{dist}(w_1,w_2)< d_1$.

Since $\Omega$ is non-discrete, we can fix two distinct points $\omega_0,\omega\in \Omega$ such that $\mbox{dist}(\omega_0,\omega)< d_1$. We construct a pseudotrajectory $\{y_t\}_{t\in G}$ in the following way:
$$
y_t = 
\begin{cases}
\Phi(t,f_{q}^{-1}(\omega)),& \mbox{if the normal form of $t\in G$ starts with $q$},\\
\Phi(t,f_{q}^{-1}(\omega_0)),& \mbox{otherwise}.
\end{cases}
$$
Note that, by $(\ref{unifcont})$,
$$\mbox{dist}(y_{q},f_{q}(y_e)) = \mbox{dist}(\omega,\omega_0)< d_1< d,$$
$$
\mbox{dist}(y_e, f_{q}^{-1}(y_{q})) = \mbox{dist}(f_{q}^{-1}(\omega_0),f_{q}^{-1}(\omega))< d,
$$
and the equality $y_{st} = f_s(y_t)$ holds for all other $s\in S$, $t\in G$. Hence $\{y_t\}_{t\in G}$ is a $d$-pseudotrajectory.

Our assumptions imply the existence of a point $x_e$ such that inequalities~$(\ref{eqDefSh})$ hold.
Consequently,
$$\mbox{dist}(y_{g^k},\Phi(g^k,x_e))=\mbox{dist}(f_g^k(f_{q}^{-1}(\omega_0)) ,f_g^k(x_e))< \Delta,\quad\forall k\in\mathbb{Z},$$
which, by expansivity, implies that
\begin{equation}
\label{contr1}
x_e = f_q^{-1}(\omega_0).
\end{equation}
Since the normal form of $\{g^kq\}_{k\in\mathbb{Z}}$ starts from $q$,
$$\mbox{dist}(y_{g^kq},\Phi(g^kq,x_e))=\mbox{dist}(f_{g}^k(\omega), f_{g}^k(f_q(x_e)))< \Delta,\quad\forall k\in\mathbb{Z}.$$
Hence, by expansivity, $\omega = f_q(x_e)$, which together with $(\ref{contr1})$ contradicts to the choice of $\omega$ and $\omega_0$.
Thus $\Phi$ does not have shadowing, which proves Item~1.

\textbf{Proof of Item 2.}
Let $\epsilon$ be any number such that for any $d<\epsilon$ the map $f_g$ has a $d$-pseudotrajectory that cannot be $\epsilon$-shadowed by any exact trajectory of $f_g$.
Consider the normal form of $g=s_r\ldots s_1$.
Fix any $d<\epsilon$. There exists a number $d_1<d$ such that for any $\phi$ that has a form $\phi=f_{s_j}\ldots f_{s_1}$ or $\phi= f_{s_j^{-1}} \ldots f_{s_{r}^{-1}}$ for some $1\leq j\leq r$ we have
\begin{equation}
\label{importref}
\mbox{dist}(\phi(w_1),\phi(w_2))\leq d,
\end{equation}
for all $w_1,w_2\in\Omega$ such that $\mbox{dist}(w_1,w_2)\leq d_1$.


Consider a $d_1$-pseudotrajectory $\{x_k\}_{k\in\mathbb{Z}}$ for $f_g$ that cannot be $\epsilon$-shadowed and
the sequences $\{z_k\}_{k\in\mathbb{Z}}$, $\{y_t\}_{t \in G}$ defined as follows
$$
\begin{cases}
z_{rk} = x_k, & \quad k\in\mathbb{Z},\\
z_{rk+j+1} = f_{s_{j+1}}(z_{rk+j}), & \quad 0 \leq j < r-1, \quad k \in \mathbb{Z};\\
\end{cases}
$$
and
$$
y_t = \begin{cases}
\Phi(v,z_{rk+j}), & \mbox{for $w=tv^{-1}=s_j\ldots s_1(s_r\ldots s_1)^k$, $k\geq0$, $1\leq j\leq r$};\\
\Phi(v,z_{-rk-j}), & \mbox{for $w=tv^{-1}=s_{r-j+1}^{-1}\ldots s_r^{-1}(s_r\ldots s_1)^{-k}$, $k\geq0$, $1\leq j\leq r$};
\end{cases}$$
where $v$ is the element of minimal length such that $t = vw$ for some $w=tv^{-1}$ of the form defined above.




By $\eqref{importref}$ the sequence $\{y_t\}_{t\in G}$ is a $d$-pseudotrajectory. If it is $\ep$-shadowed by the trajectory of a point $u_e$, then $\{x_k\}_{k\in\mathbb{Z}}$ is $\ep$-shadowed by $\{f_g^k(u_e)\}_{k\in\mathbb{Z}}$, which leads to a contradiction.
\end{proof}



\section{Appendix.}

\begin{proof}[Proof of Proposition \ref{prop}]
The generating set $S$ induces on $G$ a so-called \textit{word norm} defined as length of the shortest representation of an element in terms of elements from $S$. We define by $|g|_S$ (or simply by $|g|$) the word norm of an element $g$ with respect to $S$.

It is well known that two word norms corresponding to two finite generating sets $S$ and $S^{\prime}$ are bilipschitz equivalent, i.e. there exists a constant $C\geq 1$ such that
\begin{equation}
\label{normequiv}
|g|_{S^\prime}/C\leq |g|_S\leq C|g|_{S^\prime},\quad\forall g\in G.
\end{equation}

Fix an $\epsilon>0$. Let $d$ be the number from the definition of shadowing of~$\Phi$ with respect to $S$ corresponding to $\epsilon$.
By uniform continuity of $\Phi$, there exists a constant $d_1< d/C$ such that 
\begin{equation}
\label{uniformcont}
\mbox{dist}(f_g(\omega_1),f_g(\omega_2))< d/C,
\end{equation}
for any $g\in G$, $\omega_1,\omega_2\in \Omega$ provided $|g|_{S^{\prime}}\leq C$ and $\mbox{dist}(\omega_1,\omega_2)< d_1$.

Let $\{y_{g} \in V\}_{g\in G}$ be a $d_1$-pseudotrajectory of $\Phi$ with respect to the generating set $S^{\prime}$, i.e. by $(\ref{dpstdef})$
$$
\mbox{dist}(y_{s^{\prime}g}, f_{s^{\prime}}(y_{g}))< d_1,\quad s^{\prime}\in S^{\prime}, g\in G
$$
Consequently, by $(\ref{uniformcont})$,

\begin{multline*}
\mbox{dist}(y_{s^{\prime}_C\ldots s^{\prime}_1 g},f_{s^{\prime}_C\ldots s^{\prime}_1} (y_g))\leq
\mbox{dist}(y_{s^{\prime}_C\ldots s^{\prime}_1 g}, f_{s^{\prime}_C}(y_{s^{\prime}_{C-1}\ldots s^{\prime}_1 g}))+ \\
+ \mbox{dist}(f_{s^{\prime}_C}(y_{s^{\prime}_{C-1}\ldots s^{\prime}_1 g}), f_{s^{\prime}_C}(f_{s^{\prime}_{C-1}}(y_{s^{\prime}_{C-2}\ldots s^{\prime}_1 g}))) + \ldots + \\
+\mbox{dist}( f_{ s^{\prime}_C\ldots s^{\prime}_2 }( y_{s^{\prime}_1 g} ), f_{s^{\prime}_C\ldots s^{\prime}_2}(f_{s^{\prime}_1}(y_g)))< d_1 + d/C + \ldots + d/C<d
\end{multline*}
for any $s^{\prime}_1,\ldots, s^{\prime}_C\in S^{\prime}$, $g\in G$.
To summarize,
\begin{equation}
\label{dpstdefmod}
\mbox{dist}(y_{hg}, f_{h}(y_{g}))< d,
\end{equation}
for all $g\in G$ and $h \in G$ such that $|h|_{S^{\prime}}\leq C$ .
It follows from $(\ref{normequiv})$ that any element $s\in S$ satisfies $|s|_{S^{\prime}}\leq C$.
Thus it follows from $(\ref{dpstdefmod})$ that the sequence $\{y_{g}\}_{g\in G}$ is a $d$-pseudotrajectory of $\Phi$ with respect to the generating set $S$.

It follows from our assumptions that $\{y_{g}\}_{g\in G}$ is $\epsilon$-shadowed by some point $x_e$. However inequalities $(\ref{eqDefSh})$ do not depend on the choice of the generating set. Thus $\Phi$ has shadowing with respect to $S^{\prime}$. Clearly $\Phi$ is an uniformly continuous action with respect to $S^{\prime}$ too.

\end{proof}

\section{Acknowledgment.}

Both authors are grateful to the organizers of Montevideo Dynamical Systems Conference 2012 for hospitality and creation of a fruitful atmosphere for the discussions that eventually led to the creation of this paper.
The work of both authors was partially supported  by the Chebyshev Laboratory (Department of Mathematics and Mechanics, St. Petersburg State University)  under RF Government grant 11.G34.31.0026 and JSC ``Gazprom Neft''. The work of the first author was partially supported by and performed at Centro di ricerca matematica Ennio De Giorgi (Italy). Also the work of the first author was partially supported by the travel grant 12-01-09302-mob\_z of RFFI. Work of the second author was also partially supported by the Humboldt Foundation (Germany).


\begin{thebibliography}{99}

\bibitem{PilBook} Pilyugin S. Yu. Shadowing in Dynamical Systems. Lecture notes math., vol. 1706, Springer, Berlin; 1999.

\bibitem{PalmBook} Palmer K.J. Shadowing in dynamical systems. Theory and Applications, Kluwer, Dordrecht; 2000.

\bibitem{Pil2} Pilyugin S. Yu. Theory of pseudo-orbit shadowing in dynamical systems. Diff. Eqs. 2011, 47, 1929--1938.

\bibitem{Anosov} Anosov D.V. On a class of invariant sets for smooth dynamical systems, Proc. Fifth Internat. Conf. on Nonlinear Oscillations, vol. 2, Math. Inst. Ukranian Acad. Sci., Kiev. 1970, 39--45. (in Russian)

\bibitem{Bowen} Bowen R. Equilibrium states and the ergodic theory of Anosov diffeomorphisms. Lecture Notes in Math., Vol. 470, Springer-Verlag, Berlin-New York; 1975.

\bibitem{Katok2} Feres R., Katok A.B. Ergodic theory and dynamics of $G$-spaces (with special emphasis on rigidity phenomena), Handbook of dynamical systems, 1A. pp. 665--763, Elsevier, Amsterdam; 2002.
    
\bibitem{Fisher} Fisher D. Local rigidity of group actions: past, present, future, Recent Progr. in Dynamics MSRI Publications, Vol. 54; 2007.

\bibitem{PilTikh}  Pilyugin S. Yu., Tikhomirov S. B. Shadowing in actions of some abelian groups. Fund. Math. 2003, 179, 83--96.

\bibitem{Hurder} Hurder S. A survey of rigidity theory for Anosov actions, in ''Differential topology, foliations, and group actions'' (Rio de Janeiro, 1992), Contemp. Math., vol. 161, Amer. Math. Soc., Providence, RI, 1994, pp. 143--173.

\bibitem{Barbot} Barbot T., Maquera C. Algebraic Anosov actions of nilpotent Lie groups. Topol. and its Applic. 2013, 160, 199--219.

\bibitem{Bech} Bechtell H. The theory of groups. Addison-Wesley Publishing Co., Reading, Mass.-London, Don Mills, Ont.; 1971.


\bibitem{BrHaef} Bridson M. R., Haefliger A. Metric spaces of non-positive curvature, Grundlehren der math. Wiss. 319, Springer-Verlag, Berlin; 1999.

\bibitem{PdH}  de la Harpe P. Topics in geometric group theory. Chicago University press; 2000.

\bibitem{Kur} Kurosh A.G. The theory of groups. AMS; 2003.

\bibitem{Seg} Segal D. Polycyclic groups. Cambr. Univ. Press; 1985.

\bibitem{Gromov} Gromov M. Groups of polynomial growth and expanding maps. Publications Mathematiques de I'IHES. 1981, 53, 53--78.



\end{thebibliography}
\end{document}